\title[]{On modular decompositions of system signatures}
\author{Jean-Luc Marichal}
\address{Mathematics Research Unit, FSTC, University of Luxembourg \\
6, rue Coudenhove-Kalergi, L-1359 Luxembourg, Luxembourg}
\email{jean-luc.marichal[at]uni.lu }
\author{Pierre Mathonet}
\address{University of Li\`ege, Department of Mathematics, Grande Traverse, 12 - B37, B-4000 Li\`ege, Belgium}
\email{p.mathonet[at]ulg.ac.be }
\author{Fabio Spizzichino}
\address{Department of Mathematics, University La Sapienza, Piazzale A.\ Moro 2, I-00185, Rome, Italy}
\email{fabio.spizzichino[at]uniroma1.it }
\date{October 10, 2014}
\begin{document}

\theoremstyle{plain}
\newtheorem{theorem}{Theorem}
\newtheorem{lemma}[theorem]{Lemma}
\newtheorem{proposition}[theorem]{Proposition}
\newtheorem{corollary}[theorem]{Corollary}
\newtheorem{fact}[theorem]{Fact}
\newtheorem*{main}{Main Theorem}

\theoremstyle{definition}
\newtheorem{definition}[theorem]{Definition}
\newtheorem{example}[theorem]{Example}

\theoremstyle{remark}
\newtheorem*{conjecture}{Conjecture}
\newtheorem{remark}{Remark}
\newtheorem{claim}{Claim}

\newcommand{\N}{\mathbb{N}}                     
\newcommand{\R}{\mathbb{R}}                     
\newcommand{\bfa}{\mathbf{a}}
\newcommand{\bfx}{\mathbf{x}}
\newcommand{\bfy}{\mathbf{y}}
\newcommand{\bfz}{\mathbf{z}}
\newcommand{\bfb}{\mathbf{b}}
\newcommand{\tk}{{\mathcal T}_k}
\newcommand{\uno}{\boldsymbol 1}

\begin{abstract}
Considering a semicoherent system made up of $n$ components having i.i.d.\ continuous lifetimes, Samaniego defined its structural signature as the $n$-tuple whose $k$-th coordinate is the probability that the $k$-th component failure causes the system to fail. This $n$-tuple, which depends only on the structure of the system and not on the distribution of the component lifetimes, is a very useful tool in the theoretical analysis of coherent systems.

It was shown in two independent recent papers how the structural signature of a system partitioned into two disjoint modules can be computed from the signatures of these modules. In this work we consider the general case of a system partitioned into an arbitrary number of disjoint modules organized in an arbitrary way and we provide a general formula for the signature of the system in terms of the signatures of the modules.

The concept of signature was recently extended to the general case of semicoherent systems whose components may have dependent lifetimes. The same definition for the $n$-tuple gives rise to the probability signature, which may depend on both the structure of the system and the probability distribution of the component lifetimes. In this general setting, we show how under a natural condition
on the distribution of the lifetimes, the probability signature of the system can be expressed in terms of the probability signatures of the modules. We finally discuss a few situations where this condition holds in the non-i.i.d.\ and nonexchangeable cases and provide some applications of the main results.
\end{abstract}

\keywords{System signature, tail signature, semicoherent system, modular decomposition.}

\subjclass[2010]{60K10, 62N05, 90B25}

\maketitle

\section{Introduction}

We consider an $n$-component system $S=(C,\phi,F)$, where $C$ is the set $[n]=\{1,\ldots,n\}$ of components, $\phi\colon\{0,1\}^n\to\{0,1\}$ is the
structure function (which expresses the state of the system in terms of the states of its components), and $F$ denotes the joint c.d.f.\ of the
component lifetimes $T_1,\ldots,T_n$, that is,
$$
F(t_1,\ldots,t_n) ~=~ \Pr(T_1\leqslant t_1,\ldots,T_n\leqslant t_n)\, ,\qquad t_1,\ldots,t_n\geqslant 0.
$$
We assume that the system is \emph{semicoherent}, i.e., the structure function $\phi$ is nondecreasing\footnote{Such a function is also known as a monotone function.} in each variable and satisfies the
conditions $\phi(0,\ldots,0)=0$ and $\phi(1,\ldots,1)=1$. We also assume that the c.d.f.\ $F$ has no ties, that is, $\Pr(T_i=T_j)=0$ for all
distinct $i,j\in [n]$.

The concept of \emph{signature} was introduced in 1985 by Samaniego \cite{{Sam85}}, for systems whose components have continuous and i.i.d.\
lifetimes, as the $n$-tuple $\mathbf{s}=(s_1,\ldots,s_n)$ whose $k$-th coordinate $s_k$ is the probability that the $k$-th component failure causes the system to
fail. In other words, we have
$$
s_k ~=~ \Pr(T_S=T_{k:n}),\qquad k\in [n],
$$
where $T_S$ denotes the system lifetime and $T_{k:n}$ denotes the $k$-th smallest lifetime, i.e., the $k$-th order statistic obtained by
rearranging the variables $T_1,\ldots,T_n$ in ascending order of magnitude.

It was shown in \cite{Bol01} that $s_k$ can be explicitly written in the form\footnote{As usual, we identify Boolean vectors $\bfx\in\{0,1\}^n$
and subsets $A\subseteq [n]$ by setting $x_i=1$ if and only if $i\in A$. We thus use the same symbol to denote both a function
$f\colon\{0,1\}^n\to\R$ and the corresponding set function $f\colon 2^{[n]}\to\R$, interchangeably. For instance, we write $\phi(0,\ldots,0)=\phi(\varnothing)$.}
\begin{equation}\label{eq:asad678}
s_k ~=~ \sum_{\textstyle{A\subseteq C\atop |A|=n-k+1}}\frac{1}{{n\choose |A|}}\,\phi(A)-\sum_{\textstyle{A\subseteq C\atop
|A|=n-k}}\frac{1}{{n\choose |A|}}\,\phi(A)\, .
\end{equation}
This formula shows that, in the i.i.d.\ case, the probability $\Pr(T_S=T_{k:n})$ does not depend on the distribution $F$ of the component lifetimes. Thus, the system signature is a purely combinatorial object associated with the structure $\phi$. Due to this feature, in both the i.i.d.\ and non-i.i.d.\ cases the $n$-tuple $\mathbf{s}=(s_1,\ldots,s_n)$, where $s_k$ is defined by (\ref{eq:asad678}), is referred to as the \emph{structural signature} of the system.

Since its introduction the concept of structural signature proved to be a very useful tool in the analysis of semicoherent systems, especially
for the comparison of different system designs and the computation of the system reliability (see \cite{Sam07}).

The interest of extending the concept of signature to the general case of dependent lifetimes has been pointed out in several recent papers. Just as in the i.i.d.\ case, we can consider the $n$-tuple $\mathbf{p}=(p_1,\ldots,p_n)$, called
\emph{probability signature}, whose $k$-th coordinate is the probability $p_k=\Pr(T_S=T_{k:n})$. Thus defined, the probability signature obviously coincides with the structural signature when the component lifetimes are i.i.d.\ and continuous. Actually, it is easy to see that both concepts also coincide when the lifetimes are exchangeable and the distribution $F$ has no ties; see, e.g., \cite{NavRyc07,NavSamBalBha08} for more details. However, these two concepts are generally different. Contrary to the structural signature, the probability signature may depend on the distribution of the component lifetimes. It is then considered as a probabilistic object associated with both the structure $\phi$ and the distribution $F$; see \cite{MarMat11,MarMatWal11,Spi08,NavSpiBal10} for basic properties of this concept.

Even in the i.i.d.\ (or exchangeable) case, the computation of the signature may be a hard task when the system has a large number of components. However, the computation effort
can be greatly reduced when the system is decomposed into distinct modules (subsystems) whose structural signatures are already known.


First results along this line were presented in \cite{DaZheHu12,GerShpSpi11}. In particular, in \cite{GerShpSpi11} explicit expressions for
the structural signatures of systems consisting of two modules connected in series or in parallel were provided in terms of the structural
signatures of the modules. A general procedure to compute the structural signatures of recurrent systems (i.e., systems partitioned into
identical modules) was also described. Moreover, the key role of the concepts of tail and cumulative signatures were pointed out (see
definitions in Section 2).

In this work we extend these results in the following two directions:
\begin{enumerate}
\item[1.] Considering the general case of a system partitioned into an arbitrary number of disjoint modules connected according to an arbitrary semicoherent structure, we yield an explicit formula for the \emph{modular decomposition of the structural signature of the system}, that is, an explicit expression of the structural signature of the system only in terms of the structural signatures of the modules and the structure of the modular decomposition (i.e., the structure that defines the way the modules are interconnected). This result, which holds without any additional assumption and is obviously independent of the distribution $F$ of the component lifetimes, is presented in Section~2.

\item[2.] Considering again the general case of systems partitioned into an arbitrary number of disjoint modules, we show that a similar modular decomposition of the probability signature still holds if and only if the distribution of the component lifetimes (i.e., the function $F$) satisfies a natural decomposition condition (associated with the decomposition of the system into modules). Thus, a modular decomposition of the probability signature appears whenever two decomposition properties hold: a structural decomposition of the system into modules combined with a decomposition of the distribution of the component lifetimes. We also yield an explicit formula for this modular decomposition of the probability signature. This result is presented in Section~3. Also, we note that the proofs of our decomposition formulas are simpler than those in \cite{DaZheHu12,GerShpSpi11}.
\end{enumerate}

It is noteworthy that both the structural and probability signatures of the system can be computed by our modular decomposition formulas without knowing the structures of the modules. Only the knowledge of the signatures of the modules and the structure of the modular decomposition (i.e., the way the modules are connected) is required. Thus, the computation of the signature of a large system can be made much easier when it is decomposed into a small number of modules whose signatures are known.

In Section 4 we discuss and demonstrate our results through a few examples and provide an interpretation of the new concept of decomposition of the distribution. Some concluding remarks are then given in Section 5.

\section{Modular decomposition of the structural signature}
\label{sec:2}

We assume that the system is partitioned into modules, which in turn can be regarded as subsystems. By exploiting formula (\ref{eq:asad678}), in this section we provide an explicit formula for the structural signature of the system in terms of the structural signature of each module and the structure of the modular decomposition of the system (see Theorem~\ref{thm:main}).

Recall first that a modular decomposition of a system $(C,\phi,F)$ into $r$ disjoint modules is given by a partition $\mathcal{C}=\{C_1,\ldots,C_r\}$
of the set of components into modular subsets\footnote{Thus the subsets $C_1,\ldots,C_r$ are such that $C=\bigcup_{j=1}^rC_j$ and $C_j\cap C_k=\varnothing$ whenever $j\not=k$.} such that
\begin{itemize}
 \item for every $j\in [r]$, the components in $C_j$ are connected in a semicoherent structure described by the function $\chi_j\colon\{0,1\}^{n_j}\to\{0,1\}$ (where $n_j=|C_j|$) and thus form the module $M_j=(C_j,\chi_j,G_j)$, where $G_j$ denotes the marginal distribution, determined by $F$, of the lifetimes of the components in $C_j$;
 \item the modules are connected according to a semicoherent system described by a structure function $\psi\colon\{0,1\}^r\to\{0,1\}$;
 \item the structure $\phi$ of the system expresses through the composition
\begin{equation}\label{eq:asd45}
\phi(\bfx) ~=~ \psi(\chi_1(\bfx^{C_1}),\ldots,\chi_r(\bfx^{C_r}))\, ,\qquad\bfx\in\{0,1\}^n,
\end{equation}
where $\bfx^{C_j}=(x_i)_{i\in C_j}$, or equivalently,
\begin{equation}\label{eq:decomp12}
\phi(A) ~=~ \psi(\chi_1(A\cap C_1),\ldots,\chi_r(A\cap C_r)),\qquad A\subseteq C,
\end{equation}
\end{itemize}
(see \cite[Chap.~1]{BarPro81} for more details). For instance, if the system consists of three serially connected modules,
 then we have the structure function $\psi(z_1,z_2,z_3)=\min(z_1,z_2,z_3)=z_1\, z_2\, z_3$ and hence
$$
\phi(\bfx) ~=~ \chi_1(\bfx^{C_1})\,\chi_2(\bfx^{C_2})\,\chi_3(\bfx^{C_3})\, .
$$


Recall also that the structural signature can be equivalently expressed through the \emph{tail signature}, a concept introduced in \cite{Bol01} and named so in \cite{GerShpSpi11}. This concept is actually algebraically more convenient than that of signature and we will use it to state our formula for the modular decomposition of the signature.

The tail (structural) signature of the system is the $(n+1)$-tuple $\overline{\mathbf{S}}=(\overline{S}_0,\ldots,\overline{S}_n)$ defined by
$\overline{S}_k ~=~\sum_{i=k+1}^ns_i$ for $0\leqslant k\leqslant n-1$ and $\overline{S}_n=0$.\footnote{Clearly, $\overline{S}_n=0$ and $\overline{S}_0=1$ do not contain any information, but are defined for convenience.} The structural signature can obviously be recovered from the tail signature by using the formula $s_k=\overline{S}_{k-1}-\overline{S}_k$, for $k\in [n]$.

\begin{remark}
Coming back to Samaniego's probabilistic definition, we may interpret the number $\overline{S}_k=\sum_{i=k+1}^n\Pr(T_S=T_{i:n})=\Pr(T_S>T_{k:n})$ as the probability that the system survives beyond the $k$-th failure (provided the component lifetimes are continuous and i.i.d.)
\end{remark}

Defining the function $q_0\colon 2^{[n]}\to [0,1]$ by
\begin{equation}\label{eq:q0dev7386}
q_0(A) ~=~ \frac{1}{{n\choose |A|}}~,
\end{equation}
by (\ref{eq:asad678}) we have
\begin{equation}\label{eq:sbark}
\overline{S}_k ~=~\sum_{\textstyle{A\subseteq C\atop |A|=n-k}}q_0(A)\,
\phi(A),\qquad 0\leqslant k\leqslant n\, .
\end{equation}

Similarly, the \emph{cumulative} (structural) \emph{signature} of the system is the $(n+1)$-tuple $\mathbf{S}=(S_0,\ldots,S_n)$ defined by
$S_k=1-\overline{S}_k=\sum_{i=1}^ks_i$ for $0\leqslant k\leqslant n$.

Finally, when the system has a modular decomposition, for every $j\in [r]$, we define the function $q_0^{C_j}\colon 2^{C_j}\to [0,1]$ by
\begin{equation}\label{eq:q0dev7386j}
q_0^{C_j}(A) ~=~ \frac{1}{{n_j\choose |A|}}~,
\end{equation}
and we denote by $\overline{\mathbf{S}}^j$ and $\mathbf{S}^j$ the tail and cumulative signatures, respectively, of module $(C_j,\chi_j,G_j)$, that is
\[
\overline{S}^j_{k} ~=~ 1-S^j_{k} ~=~ \sum_{\textstyle{A\subseteq C_j\atop |A|=n_j-k}}q_0^{C_j}(A)\,\chi_j(A)\, ,\qquad 0\leqslant k\leqslant n_j=|C_j|\, .
\]

As already mentioned, our goal here is to obtain a general formula that expresses the structural signature of
the system in terms of the structural signatures of the modules (Theorem~\ref{thm:main}). This formula was obtained in \cite{GerShpSpi11} and independently in \cite{DaZheHu12}
in the special case of two modules. For two modules connected in series, the formula can be written as follows (using our notation):
\begin{equation}\label{eq:s7dfsfdf}
\overline{S}_{n-k} ~=~ \sum_{\textstyle{0\leqslant a_1\leqslant n_1,~0\leqslant a_2\leqslant n_2\atop a_1+a_2=k}}\frac{{n_1\choose
a_1}{n_2\choose a_2}}{{n\choose k}}~\overline{S}^1_{n_1-a_1}\,\overline{S}^2_{n_2-a_2}{\,}.
\end{equation}
We immediately see that the right-hand expression in this formula is independent of the structures of the modules. This means that changing these structures while keeping the same module signatures has no effect on the signature of the system.

To extend this formula to the general case, we need to recall the concept of multilinear extension of a (pseudo)-Boolean function (see \cite{Owe72}).

\begin{definition}
The \emph{multilinear extension} of a pseudo-Boolean function $\chi\colon\{0,1\}^m\to\R$ is the polynomial function $\widehat{\chi}: [0,1]^m\to\R$ defined by
\begin{equation}\label{eq:asd456}
\widehat{\chi}(z_1,\ldots,z_m) ~=~ \sum_{B\subseteq [m]}\chi(B)\,\prod_{j\in B}z_j\,\prod_{j\in [m]\setminus B}(1-z_j)\, .
\end{equation}
\end{definition}

For instance, if $\chi$ is the structure function of a series system made up of three components, we have
\[
\widehat{\chi}(z_1,z_2,z_3) ~=~ z_1z_2z_3.
\]
Similarly, if the components are connected in parallel, we then have
\[
\widehat{\chi}(z_1,z_2,z_3) ~=~ 1-(1-z_1)(1-z_2)(1-z_3).
\]

\begin{remark}\label{rem2}
The multilinear extension of a pseudo-Boolean function $\chi\colon\{0,1\}^m\to\R$ is the unique function $\widehat{\chi}:[0,1]^m\to\R$ that has the following properties:
\begin{enumerate}
\item[(i)] $\widehat{\chi}$ is a polynomial function of degree at most one in each variable,

\item[(ii)] $\widehat{\chi}$ coincides with $\chi$ on $\{0,1\}^m$.
\end{enumerate}
In particular, Eq.~(\ref{eq:asd456}) is the natural extension of the classical formula
\begin{equation}\label{eq:asd456aa}
\chi(z_1,\ldots,z_m) ~=~ \sum_{B\subseteq [m]}\chi(B)\,\prod_{j\in B}z_j\,\prod_{j\in [m]\setminus B}(1-z_j)\, .
\end{equation}
It is well known that if a semicoherent system, with structure function $\chi$, is made up of independent components, then $\widehat{\chi}$ is precisely the \emph{reliability function} of the system. Namely, for $\mathbf{r}=(r_1,\ldots,r_m)\in [0,1]^m$, $\widehat{\chi}(\mathbf{r})$ is the reliability of the system expressed as a function of the single component reliabilities $r_1,\ldots,r_m$.
\end{remark}

Define the function $c_0\colon \prod_{i=1}^r \{0,\ldots,n_i\}\to [0,1]$ by
$$
c_0(a_1,\ldots,a_r) ~=~ \frac{{n_1\choose a_1}\cdots {n_r\choose a_r}}{{n\choose a_1+\cdots +a_r}}{\,}.
$$
Combining this function with (\ref{eq:q0dev7386}) and (\ref{eq:q0dev7386j}), we obtain
\begin{equation}\label{eq:ds7f6}
q_0(A) ~=~ c_0(|A\cap C_1|,\ldots,|A\cap C_r|)\,\prod_{i=1}^r q_0^{C_j}(A\cap C_j){\,},\qquad A\subseteq C.
\end{equation}

\begin{remark}
Even though formula (\ref{eq:ds7f6}) is trivial (since it follows immediately from the definitions of $q_0$, $q_0^{C_j}$, and $c_0$), we will see that it is actually a key result for the
decomposition of the structural signature. We will also see that this formula is at the root of the decomposition of the distribution function of the component lifetimes that we will introduce in the next section to derive the modular decomposition of the probability signature (see Definition~\ref{de:7asd6}).
\end{remark}

For every $k=0,\ldots,n$, we also introduce the following set
$$
\tk ~=~ \textstyle{\{\bfa=(a_1,\ldots,a_r)\in\N^r : 0\leqslant a_j\leqslant n_j\mbox{ for $j=1,\ldots,r$ and }\sum_{j=1}^ra_j=k\}}.
$$

The algebraic tools introduced above allow us to state and prove our main theorem, which gives an explicit expression of the system tail signature $\overline{\mathbf{S}}$ in terms of the tail signatures $\overline{\mathbf{S}}^1,\ldots,\overline{\mathbf{S}}^r$ of the modules, thus generalizing formula (\ref{eq:s7dfsfdf}).

Note that the component lifetimes are irrelevant in the results of this section (since we deal with structural signatures). To stress on this fact, we write $(C,\phi)$ and $(C_j,\chi_j)$ instead of $(C,\phi,F)$ and $(C_j,\chi_j,G_j)$, respectively.

\begin{theorem}\label{thm:main}
For every semicoherent system $(C,\phi)$ with a modular decomposition into $r$ disjoint modules $(C_j,\chi_j)$, $j=1,\ldots,r$, connected
according to a semicoherent structure $\psi$, we have
\begin{equation}\label{eq:ssd4}
\overline{S}_{n-k} ~=~ \sum_{\bfa\in \mathcal{T}_k} c_0(\bfa)\,\widehat{\psi}\big(\overline{S}^1_{n_1-a_1},\ldots,
\overline{S}^r_{n_r-a_r}\big),\,\qquad 0\leqslant k\leqslant n.
\end{equation}
\end{theorem}

\begin{proof}
By combining (\ref{eq:sbark}) with (\ref{eq:decomp12}) and (\ref{eq:asd456aa}) we have for $0\leqslant k\leqslant n$
\begin{eqnarray*}
\overline{S}_{n-k} &=& \sum_{|A|=k} q_0(A)\phi(A) ~=~ \sum_{|A|=k} q_0(A)\,\psi\big(\chi_1(A\cap C_1),\ldots,\chi_r(A\cap C_r)\big)\\
&=&  \sum_{B\subseteq [r]}\psi(B)\,\sum_{|A|=k} q_0(A)\,\prod_{j\in B}\chi_j(A\cap C_j)\,\prod_{j\in [r]\setminus B}(1-\chi_j(A\cap C_j))\, .
\end{eqnarray*}
Since $\mathcal{C}=\{C_1,\ldots,C_r\}$ is a partition of $C$, the map from $2^C$ to $\prod_{j=1}^r 2^{C_j}$ given
by
$$
A\mapsto (A\cap C_1,\ldots,A\cap C_r)
$$
is a bijection that maps $\{A:|A|=k\}$ onto $\{(A_1,\ldots,A_r):(|A_1|,\ldots,|A_r|)\in \tk\}$. Therefore, we obtain
$$
\overline{S}_{n-k} ~=~ \sum_{B\subseteq [r]}\psi(B)\,\sum_{\bfa\in \tk}\sum_{\textstyle{A_1\subseteq C_1\atop |A_1|=a_1}}
\cdots\sum_{\textstyle{A_r\subseteq C_r\atop |A_r|=a_r}} q_0\bigg(\bigcup_{j=1}^rA_j\bigg)\,\prod_{j\in B}\chi_j(A_j)\,\prod_{j\in [r]\setminus
B}(1-\chi_j(A_j))\,.
$$
By (\ref{eq:ds7f6}) the right-hand side of this expression becomes
\[
\sum_{B\subseteq [r]}\psi(B)\,\sum_{\bfa\in \tk}c_0(\bfa)\,\sum_{\textstyle{A_1\subseteq C_1\atop |A_1|=a_1}}
\cdots\sum_{\textstyle{A_r\subseteq C_r\atop |A_r|=a_r}} \prod_{j=1}^rq_0^{C_j}(A_j)\,\prod_{j\in B}\chi_j(A_j)\,\prod_{j\in [r]\setminus
B}(1-\chi_j(A_j)),
\]
or equivalently,
\[
\sum_{\bfa\in \tk}c_0(\bfa)\,\sum_{B\subseteq [r]}\psi(B)\,\prod_{j\in B}\Bigg(\sum_{\textstyle{A_j\subseteq C_j\atop |A_j|=a_j}}q_0^{C_j}(A_j)
\,\chi_j(A_j)\Bigg)\,\prod_{j\in [r]\setminus B}\Bigg(\sum_{\textstyle{A_j\subseteq C_j\atop
|A_j|=a_j}}q_0^{C_j}(A_j)\,\big(1-\chi_j(A_j)\big)\Bigg)\, .
\]
Since we have $\sum_{{A_j\subseteq C_j,
|A_j|=a_j}}q_0^{C_j}(A_j)=1$ for any $a_j$ in $\{1,\ldots, n_j\}$, we immediately obtain
$$
\overline{S}_{n-k} ~=~ \sum_{\bfa\in \tk}c_0(\bfa)\,\sum_{B\subseteq [r]}\psi(B)\,\prod_{j\in B}\overline{S}^j_{n_j-a_j}\, \prod_{i\in
[r]\setminus B}(1-\overline{S}^j_{n_j-a_j}){\,},
$$
where, by (\ref{eq:asd456}), the inner sum is precisely $\widehat{\psi}(\overline{S}^1_{n_1-a_1},\ldots,
\overline{S}^r_{n_r-a_r})$.
\end{proof}

It is clear that Theorem~\ref{thm:main} is not really useful for small systems whose signatures can be computed easily. However, we now give a small example to show how Formula~(\ref{eq:ssd4}) can be applied.

\begin{example}
Consider the six-component system indicated in Figure~\ref{fig:bs}, for which we intend to compute the tail structural signature. Consider also the partition $\mathcal{C}=(\{1,2\},\{3,4,5,6\})$ associated with the modular decomposition whose organizing structure function is given by $\psi(z_1,z_2)=z_1z_2$. The tail signatures of the modules are respectively given by
$$
\overline{S}^1 ~=~ (1,1,0),\qquad \overline{S}^2 ~=~ (1,1/2,0,0,0).
$$
We have $\overline{S}_0=1$ by definition and, using Formula~(\ref{eq:ssd4}), we obtain
\begin{eqnarray*}
\overline{S}_1 &=& c_0(2,3){\,}\overline{S}_0^1{\,}\overline{S}_1^2 + c_0(1,4){\,}\overline{S}_1^1{\,}\overline{S}_0^2 ~=~ 2/3\\
\overline{S}_2 &=& c_0(2,2){\,}\overline{S}_0^1{\,}\overline{S}_2^2 + c_0(1,3){\,}\overline{S}_1^1{\,}\overline{S}_1^2 + c_0(0,4){\,}\overline{S}_2^1{\,}\overline{S}_0^2 ~=~ 4/15.
\end{eqnarray*}
We also obtain $\overline{S}_3=\overline{S}_4=\overline{S}_5=\overline{S}_6=0$.

\setlength{\unitlength}{.06\linewidth}
\begin{figure}[htbp]\centering
\begin{picture}(14,3)
\put(2.5,0){\framebox(1,1){$2$}}\put(2.5,2){\framebox(1,1){$1$}}\put(5.5,1){\framebox(1,1){$3$}}\put(7.5,1){\framebox(1,1){$4$}}%
\put(10.5,0){\framebox(1,1){$6$}}\put(10.5,2){\framebox(1,1){$5$}}%
\put(0,1.5){\line(1,0){1.5}}\put(1.5,0.5){\line(1,0){1}}\put(1.5,2.5){\line(1,0){1}}\put(3.5,0.5){\line(1,0){1}}\put(3.5,2.5){\line(1,0){1}}%
\put(4.5,1.5){\line(1,0){1}}\put(6.5,1.5){\line(1,0){1}}\put(8.5,1.5){\line(1,0){1}}\put(9.5,0.5){\line(1,0){1}}\put(9.5,2.5){\line(1,0){1}}%
\put(11.5,0.5){\line(1,0){1}}\put(11.5,2.5){\line(1,0){1}}\put(12.5,1.5){\line(1,0){1.5}}%
\put(1.5,0.5){\line(0,1){2}}\put(4.5,0.5){\line(0,1){2}}\put(9.5,0.5){\line(0,1){2}}\put(12.5,0.5){\line(0,1){2}}%
\put(0,1.5){\circle*{0.15}}\put(14,1.5){\circle*{0.15}}
\end{picture}
\caption{A six-component system} \label{fig:bs}
\end{figure}
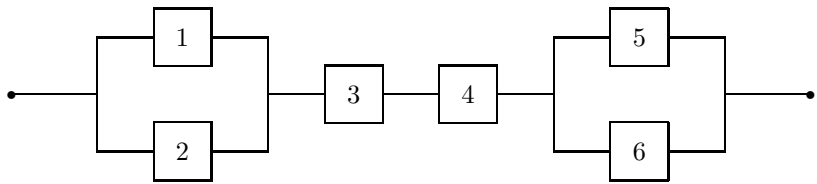
\end{example}

Interestingly, we immediately observe that the right-hand side of (\ref{eq:ssd4}) can be interpreted as an expected value with respect to the distribution defined by the function $c_0$ over $\tk$, namely the multivariate hypergeometric distribution. Note that (\ref{eq:ssd4}) can also be regarded as the law of total probability for a system whose components have i.i.d.\ lifetimes (see, in a more general setting, Proposition~\ref{prop:intc} and Remark~\ref{rem:sdf444}). This observation provides a sound, but heuristic, explanation of Theorem~\ref{thm:main}. We give however a more general and formal proof of the theorem, allowing a straightforward extension to the general case of probability signatures (see Theorem~\ref{thm:main4}).

Theorem~\ref{thm:main} immediately yields the following corollary.

\begin{corollary}\label{ref:cormain}
The structural signature of a system with a modular decomposition does not change when one modifies the modules without changing their structural signatures. In
particular, the structural signature can be computed from the structural signatures of the modules without the explicit knowledge of the structures $\chi_1,\ldots,\chi_r$ of
the modules.
\end{corollary}

\begin{example}\label{ex:11}
Suppose that the system consists of $r$ serially connected modules (hence $\psi(\mathbf{z})=\prod_{j=1}^rz_j$). Then, by (\ref{eq:ssd4}) we see
that $\overline{S}_{n-k}$ is given by the hypergeometric convolution product
$$
\overline{S}_{n-k} ~=~ \sum_{\textstyle{0\leqslant a_j\leqslant n_j\atop a_1+\cdots +a_r=k}} \frac{{n_1\choose a_1}\cdots {n_r\choose
a_r}}{{n\choose k}}\,
 \prod_{j=1}^r\overline{S}^j_{n_j-a_j}\, .
$$
\end{example}

We also obtain the following dual version of Theorem~\ref{thm:main} in which the tail signatures are replaced by the cumulative signatures.
Recall that the dual of a structure function $\chi\colon\{0,1\}^m\to\{0,1\}$ is the structure function $\chi^d\colon\{0,1\}^m\to\{0,1\}$ defined
by $\chi^d(\bfx)=1-\chi(\mathbf{1}-\bfx)$, where $\mathbf{1}=(1,\ldots,1)$.

\begin{theorem}\label{thm:maindual}
For every semicoherent system $(C,\phi)$ with a modular decomposition into $r$ disjoint modules $(C_j,\chi_j)$, $j=1,\ldots,r$, connected
according to a semicoherent structure $\psi$, we have
\begin{equation}\label{eq:ssd4dual}
S_{n-k} ~=~ \sum_{\bfa\in \tk} c_0(\bfa)\, \widehat{\psi^d}\big(S^1_{n_1-a_1},\ldots,S^1_{n_r-a_r}\big)\, .
\end{equation}
\end{theorem}

\begin{proof}
By definition of $\psi^d$ we have $\widehat{\psi^d}(z_1,\ldots,z_r)=1-\widehat{\psi}(1-z_1,\ldots,1-z_r)$, for all
$z_1,\ldots,z_r\in[0,1]$, and therefore we have
\begin{eqnarray*}
 \widehat{\psi^d}\big(S^1_{n_1-a_1},\ldots,S^r_{n_r-a_r}\big) &=& 1-\widehat{\psi}\big(1-S^1_{n_1-a_1},\ldots,1-S^r_{n_r-a_r}\big)\\
&=& 1-\widehat{\psi}\big(\overline{S}^1_{n_1-a_1},\ldots,\overline{S}^r_{n_r-a_r}\big).
\end{eqnarray*}
The right-hand side of (\ref{eq:ssd4dual}) becomes
\[
\sum_{\bfa\in \tk} c_0(\bfa)\, \left(1-\widehat{\psi}\big(\overline{S}^1_{n_1-a_1},\ldots,\overline{S}^r_{n_r-a_r}\big)\right),
\]
and the result follows from Theorem~\ref{thm:main} since the restriction of the function $c_0$ to $\tk$ is a probability distribution.
\end{proof}

\begin{example}\label{ex:144}
Suppose that the system consists of $r$ modules connected in parallel (hence $\psi^d(\mathbf{z})=\prod_{j=1}^rz_j$). Then, by
(\ref{eq:ssd4dual}), we see that $S_{n-k}$ is given by the hypergeometric convolution product
$$
S_{n-k} ~=~ \sum_{\textstyle{0\leqslant a_j\leqslant n_j\atop a_1+\cdots +a_r=k}} \frac{{n_1\choose a_1}\cdots {n_r\choose a_r}}{{n\choose k}}\,
 \prod_{j=1}^rS^j_{n_j-a_j}\, .
$$
\end{example}

\section{Modular decomposition of the probability signature}

We now analyze the problem of modular decomposition for the probability signature. Recall that the probability signature of a system $S=(C,\phi,F)$ is the $n$-tuple $\mathbf{p}=(p_1,\ldots,p_n)$ whose $k$-th coordinate is the probability $p_k=\Pr(T_S=T_{k:n})$. As already mentioned, $\mathbf{p}$ depends on $F$ and in general does not coincide with the structural signature $\mathbf{s}$.


It was shown in \cite{MarMat11} that the dependence of $\mathbf{p}$ on the c.d.f.\ $F$ is captured by the \emph{relative quality function} $q\colon 2^{[n]}\to [0,1]$ associated with $F$. This function is defined by
\begin{equation}\label{eq:dshfsdfhjn}
q(A) ~=~ \Pr\Big(\max_{i\notin A}T_i<\min_{i\in A}T_i\Big),\qquad A\subseteq C,
\end{equation}
with the convention that $q(\varnothing)=q(C)=1$ and satisfies the immediate property
\begin{equation}\label{eq:s7df5}
\sum_{|A|=k}q(A) ~=~ 1,\qquad 0\leqslant k\leqslant n.
\end{equation}
It was shown \cite{MarMat11} that, if $F$ is absolutely continuous (actually the assumption that $F$ has no ties is sufficient), then
\begin{equation}\label{eq:asghjad678}
p_k ~=~ \sum_{|A|=n-k+1}q(A)\,\phi(A)-\sum_{|A|=n-k}q(A)\,\phi(A)\, .
\end{equation}
We clearly see that (\ref{eq:asghjad678}) reduces to (\ref{eq:asad678}) whenever $q$ is a symmetric function, i.e., $q(A)=1/{n\choose |A|}=q_0(A)$, and this property holds when the component lifetimes are exchangeable.

In the general dependent case, we naturally introduce the \emph{tail probability signature} of the system as the $(n+1)$-tuple $\overline{\mathbf{P}}=(\overline{P}_0,\ldots,\overline{P}_n)$ defined by $\overline{P}_n=0$ and
$$
\overline{P}_k ~=~ \sum_{i=k+1}^np_i ~=~ \Pr(T_S>T_{k:n}) {\,},\qquad 0\leqslant k\leqslant n-1.
$$
Just as for the structural signature, the probability signature can be recovered by using the formula $p_k=\overline{P}_{k-1}-\overline{P}_k$, for $k\in [n]$.

According to (\ref{eq:asghjad678}), the analog of (\ref{eq:sbark}) for the probability signatures is
\begin{equation}\label{eq:sd5f4}
\overline{P}_k ~=~ \sum_{\textstyle{A\subseteq C\atop |A|=n-k}}q(A)\,\phi(A),\qquad 0\leqslant k\leqslant n\, .
\end{equation}

Similarly, the \emph{cumulative probability signature} of the system is the $(n+1)$-tuple $\mathbf{P}=(P_0,\ldots,P_n)$ defined by $P_k=1-\overline{P}_k=\sum_{i=1}^kp_i=\Pr(T_S\leqslant T_{k:n})$ for $0\leqslant k\leqslant n$.

As in the previous section we consider systems that are partitioned into modules. Thus, we have
a partition $\mathcal{C}=\{C_1,\ldots,C_r\}$ of the set of components and the components in each $C_j$ are connected according to a semicoherent structure $\chi_j$.

We now define the tail and cumulative probability signatures of module $(C_j,\chi_j,G_j)$, which we denote by $\overline{\mathbf{P}}^j$
and $\mathbf{P}^j$, respectively. We denote by $q^{C_j}$ the relative quality function associated with $C_j$
obtained from the marginal distribution $G_j$; that is,
$$
q^{C_j}(A) ~=~ \Pr\Big(\max_{i\in C_j\setminus A}T_i<\min_{i\in A}T_i\Big)\, ,\qquad A\subseteq C_j\, .
$$

Applying (\ref{eq:sd5f4}) in module $(C_j,\chi_j,G_j)$ we then obtain
\[
\overline{P}^j_{k} ~=~ 1-P^j_{k} ~=~ \sum_{\textstyle{A\subseteq C_j\atop |A|=n_j-k}}q^{C_j}(A)\,\chi_j(A)\, ,\qquad 0\leqslant k\leqslant n_j\, .
\]

Since the probability signatures depend on the distribution of lifetimes, we cannot expect to obtain an extension of Theorem~\ref{thm:main} to the case of probability signatures without any assumption on the function $F$. Since the signatures depend on $F$ through the relative quality functions $q$ and $q^{C_j}$ only, it is natural to impose a condition on these functions only. This can be done by extending (\ref{eq:ds7f6}) to the relative quality functions as follows.

\begin{definition}\label{de:7asd6}
Given a partition $\mathcal{C}=\{C_1,\ldots,C_r\}$ of $C$, we say that the relative quality function $q$ is \emph{$\mathcal{C}$-decomposable}
if there exists a function $c:\prod_{i=1}^r \{0,\ldots,n_i\} \to\R$ such that
\begin{equation}\label{eq:s8fd7}
q(A) ~=~ c(|A\cap C_1|,\ldots,|A\cap C_r|)\,\prod_{j=1}^rq^{C_j}(A\cap C_j)\, ,\qquad A\subseteq C\, .
\end{equation}
\end{definition}

\begin{remark}
We observe from Definition~\ref{de:7asd6} that the $\mathcal{C}$-decomposability of $q$ depends only on the partition $\mathcal{C}=\{C_1,\ldots,C_r\}$ and the function $F$ (through the functions $q$ and $q^{C_j}$) but not on the structures of the system and its modules. Moreover, the corresponding function $c$ is completely determined by the functions $q$ and $q^{C_j}$ ($j=1,\ldots,r$). Indeed, given any $r$-tuple $(a_1,\ldots,a_r)$, such that $0\leqslant a_j\leqslant |C_j|$, by (\ref{eq:s8fd7}) we have
$$
c(a_1,\ldots,a_r) ~=~ \frac{q(A_1\cup\cdots\cup A_r)}{\prod_{j=1}^rq^{C_j}(A_j)}\,
$$
whenever the subsets $A_j\subseteq C_j$ are such that $|A_j|=a_j$ and $q^{C_j}(A_j)\neq 0$ for $j=1,\ldots,r$.
To see that such subsets $A_j$ exist, just observe that Eq.~(\ref{eq:s7df5}) still holds when $q$ is replaced with $q^{C_j}$.
\end{remark}

At first glance, the condition given in Definition~\ref{de:7asd6} might seem artificial and rarely satisfied in applications. However, we will show that in a sense this condition is not only sufficient (Theorem~\ref{thm:main4}) but also necessary (Theorem~\ref{thm:f5sd7fsd5}) for the modular decomposition of the probability signature to hold. We will also provide in Section 4 a few examples in which this condition follows from natural assumptions.

We now state the extension of Theorem~\ref{thm:main} to probability signatures. Under the assumption that function $q$ is $\mathcal{C}$-decomposable, this result gives an explicit expression of the tail probability signature $\overline{\mathbf{P}}$ in terms of the tail probability signatures $\overline{\mathbf{P}}^1,\ldots,\overline{\mathbf{P}}^r$ of the modules.

\begin{theorem}\label{thm:main4}
Assume that the relative quality function $q$ associated with a distribution $F$ is $\mathcal{C}$-decomposable for some partition $\mathcal{C}=\{C_1,\ldots,C_r\}$ of $C$. Then, for every semicoherent system $(C,\phi,F)$ with a modular decomposition into $r$ disjoint modules $(C_j,\chi_j,G_j)$, $j=1,\ldots,r$, connected according to a semicoherent structure $\psi\colon\{0,1\}^r\to\{0,1\}$, we have
\begin{equation}\label{eq:ssd4proba}
\overline{P}_{n-k} ~=~ \sum_{\bfa\in \mathcal{T}_k} c(\bfa)\,\widehat{\psi}\big(\overline{P}^1_{n_1-a_1},\ldots,
\overline{P}^r_{n_r-a_r}\big),\,\qquad 0\leqslant k\leqslant n.
\end{equation}
\end{theorem}
\begin{proof}
The proof is exactly the same as that of Theorem~\ref{thm:main}, except that $q_0$ and $c_0$ must be replaced with $q$ and $c$, respectively, and Eq.~(\ref{eq:ds7f6}) with Eq.~(\ref{eq:s8fd7}).
\end{proof}

To obtain Theorem~\ref{thm:maindual}, the dual version of Theorem~\ref{thm:main}, we have used the fact that the restriction of function $c_0$ to each set $\mathcal{T}_k$ is a probability
distribution (namely the multivariate hypergeometric distribution). The following result shows that the restriction of function $c$ to each set $\mathcal{T}_k$ is also a probability
distribution whenever the relative quality function
$q$ is $\mathcal{C}$-decomposable for some partition $\mathcal{C}$.

For every $k\in\{0,\ldots,n\}$ and every $\bfa\in\tk$ we introduce the following event:
\begin{eqnarray}
E_{k,\bfa} &=& (\mbox{among the first $n-k$ failed components, there are exactly}\label{eq:f68sa7dg}\\
 && \mbox{ $n_j-a_j$ components in $C_j$ for all $j\in [r]$}).\nonumber
\end{eqnarray}
We observe that $E_{k,\bfa}$ is also the following event: (among the best $k$ components, there are exactly $a_j$ components in $C_j$ for all $j\in [r]$).

\begin{proposition}\label{prop:intc}
Assume that the relative quality function $q$ is $\mathcal{C}$-decomposable for some partition $\mathcal{C}=\{C_1,\ldots,C_r\}$ of $C$.
Then, for each $k\in \{0,\ldots,n\}$ the restriction of function $c$ to $\tk$ is a probability distribution.
More precisely, for every $\bfa\in\tk$, $c(\bfa)$ is exactly the probability $\Pr(E_{k,\bfa})$.
\end{proposition}

\begin{proof}
Since $q(A)$ is the probability that the best $|A|$ components are precisely those in $A$, we must have
\begin{equation}\label{eq:r65w7rwe}
\Pr(E_{k,\bfa}) ~=~ \sum_{A\subseteq C:{\,}|A\cap C_1|=a_1,\ldots,|A\cap C_r|=a_r}q(A).
\end{equation}

Since $q$ is $\mathcal{C}$-decomposable, by (\ref{eq:s8fd7}) we have
\begin{eqnarray*}
\Pr(E_{k,\bfa}) &=& \sum_{A_1\subseteq C_1:{\,}|A_1|=a_1}\,\cdots\,\sum_{A_r\subseteq C_r:{\,}|A_r|=a_r}c(\bfa)\,\prod_{j=1}^rq^{C_j}(A_j)\\
&=& c(\bfa)\,\prod_{j=1}^r\bigg(\sum_{A_j\subseteq C_j:{\,}|A_j|=a_j}q^{C_j}(A_j)\bigg),
\end{eqnarray*}
where the product is $1$ (apply Eq.~(\ref{eq:s7df5}) to every function $q^{C_j}$).
\end{proof}

\begin{remark}\label{rem:sdf444}
We note that a formula similar to (\ref{eq:ssd4proba}) can be derived from the law of total probability. In fact, since the events $E_{k,\bfa}$ ($\bfa\in\tk$), as defined right before Proposition~\ref{prop:intc}, form a partition of the sample space, we have
$$
\overline{P}_{n-k} ~=~ \Pr(E_k) ~=~ \sum_{\bfa\in \mathcal{T}_k} \Pr(E_k\mid E_{k,\bfa})\,\Pr(E_{k,\bfa}),
$$
where
\begin{eqnarray*}
E_k &=& (T>T_{n-k:n})\\ &=& (\mbox{The system is still surviving after the first $(n-k)$ component failures}).
\end{eqnarray*}
On the one hand, we saw in (\ref{eq:r65w7rwe}) that $\Pr(E_{k,\bfa})$ is a sum of $q$ values and reduces to $c(\bfa)$ under the $\mathcal{C}$-decomposability of $q$ (Proposition~\ref{prop:intc}). On the other hand, one can show that, under the conditional independence of the events $(T_{C_j}>T_{n-k:n})$, $j=1,\ldots,r$, given $E_{k,\bfa}$, where $T_{C_j}$ is the lifetime of module $(C_j,\chi_j,G_j)$, we obtain
\begin{equation}\label{eq:6fsd7sdds}
\Pr(E_k\mid E_{k,\bfa}) ~=~ \widehat{\psi}\big(\Pr(T_{C_1}>T_{n-k:n}\mid E_{k,\bfa}),\ldots,\Pr(T_{C_r}>T_{n-k:n}\mid E_{k,\bfa})\big).
\end{equation}
Finding general conditions under which the probability $\Pr(T_{C_j}>T_{n-k:n}\mid E_{k,\bfa})$ reduces to $\overline{P}_{j,n_j-a_j}$ remains an interesting open question.
\end{remark}

We now provide the dual form of Theorem~\ref{thm:main4}, that is the extension of Theorem~\ref{thm:maindual} to the probability signatures. The proof is similar to that of Theorem~\ref{thm:maindual} and thus is omitted.

\begin{theorem}\label{thm:maindual2}
Assume that the relative quality function $q$ associated with a distribution $F$ is $\mathcal{C}$-decomposable for some partition $\mathcal{C}=\{C_1,\ldots,C_r\}$ of $C$. Then, for every semicoherent system $(C,\phi,F)$ with a modular decomposition into $r$ disjoint modules $(C_j,\chi_j,G_j)$, $j=1,\ldots,r$, connected according to a semicoherent structure $\psi\colon\{0,1\}^r\to\{0,1\}$, we have
\begin{equation}\label{eq:ssd4ddualproba}
P_{n-k} ~=~ \sum_{\bfa\in \tk} c(\bfa)\, \widehat{\psi^d}\big(P^1_{n_1-a_1},\ldots,P^r_{n_r-a_r}\big)\, .
\end{equation}
\end{theorem}


We end this section by showing that in a sense the $\mathcal{C}$-decomposability of $q$ is necessary for the modular decomposition of the probability signature to hold.

\begin{theorem}\label{thm:f5sd7fsd5}
Consider a partition $\mathcal{C}=\{C_1,\ldots,C_r\}$ of $C$ and a distribution $F$ of the component lifetimes. Assume that there exists a function $\gamma\colon\prod_{j=1}^r\{0,\ldots,n_j\}\to\R$ such that, for every semicoherent system $(C,\phi,F)$ with a modular decomposition into $r$ disjoint modules $(C_j,\chi_j,G_j)$, $j=1,\ldots,r$, connected according to a semicoherent structure $\psi\colon\{0,1\}^r\to\{0,1\}$, we have
\begin{equation}\label{eq:d4}
\overline{P}_{n-k} ~=~ \sum_{\bfa\in \mathcal{T}_k} \gamma(\bfa)\, \widehat{\psi}\big(\overline{P}^1_{n_1-a_1},\ldots,\overline{P}^r_{n_r-a_r}\big)\,.
\end{equation}
Then the relative quality function $q$ associated with $F$ is $\mathcal{C}$-decomposable.
\end{theorem}

\begin{proof}
Let us consider a subset $B$ of the set of components $C$ and try to decompose $q(B)$. The key observation is that the relative quality function is determined by
 the tail signature of appropriate systems. Those systems are only semicoherent.

If $B$ is empty, then $q(B)=1$ (by definition) and, since $B\cap C_1,\ldots,B\cap C_r$ are also empty, we have $q^{C_j}(B\cap C_j)=1$.
Therefore we can set $c(0,\ldots,0)=1$.

If $B$ is nonempty, then so is the set $m_B=\{j\in [r]:B\cap C_j\not=\varnothing\}$. For every $j\in m_B$, let us form the module $(C_j,\chi_j,G_j)$, where $\chi_j\colon 2^{C_j}\to\{0,1\}$ is defined by
\[
\chi_j(D) ~=~
\left\{\begin{array}{ll}
1, & \mbox{if}\,D\supseteq B\cap C_j{\,},\\
0, & \mbox{otherwise}.
\end{array}\right.
\]
In other words, the pseudo-Boolean function $\chi_j:\{0,1\}^{n_j}\to\R$ is given by
\[\chi_j(y_1,\ldots,y_{n_j})=\prod_{k\in B\cap C_j}y_k.\]
It is semicoherent since $B\cap C_j$ is nonempty.

Now, let us connect these modules in series to obtain a structure $\phi$
that is also semicoherent (this means that we consider $\psi(z_1,\ldots,z_r)=\prod_{j\in m_B}z_j$). Actually, the function $\phi$ is nothing other than the set function $\phi_B\colon 2^{C}\to\{0,1\}$ defined by
\[
\phi_B(D) ~=~
\left\{\begin{array}{ll}
1, & \mbox{if}\,D\supseteq B,\\
0, & \mbox{otherwise}.
\end{array}\right.
\]
Now, we compute the tail probability signatures of such functions using the formula
$$
\overline{P}_{n-k} ~=~ \sum_{\textstyle{A\subseteq C\atop |A|=k}}q(A)\,\phi(A)\, .
$$
It follows that for $0\leqslant k<|B|$, we have $\overline{P}_{n-k}=0$. Moreover, setting $k=|B|$, we have
\[
\overline{P}_{n-k} ~=~ \overline{P}_{n-|B|} ~=~ \sum_{\textstyle{A\subseteq C\atop |A|=|B|}}q(A)\,\phi_B(A) ~=~
\sum_{\textstyle{A\supseteq B\atop |A|=|B|}}q(A)\,\phi_B(A) ~=~ q(B).
\]
We obtain the same results for the module signatures: for every $j\in m_B$, we have
$\overline{P}^j_{n_j-k}=0$ for $0\leqslant k<|B\cap C_j|$ and
\[
\overline{P}^j_{n_j-|B\cap C_j|} ~=~ q^{C_j}(B\cap C_j).
\]
Now, applying formula (\ref{eq:d4}) to the system $(C,\phi,F)$ with $k=|B|$ leads to
\begin{equation}\label{eqq}
q(B) ~=~ \sum_{\bfa\in \mathcal{T}_{|B|}} \gamma(\bfa)\, \widehat{\psi}\big(\overline{P}^1_{n_1-a_1},\ldots,\overline{P}^r_{n_r-a_r}\big)
 ~=~ \sum_{\bfa\in \mathcal{T}_{|B|}} \gamma(\bfa)\, \prod_{j\in m_B}\overline{P}^j_{n_j-a_j}.
\end{equation}
We notice that the latter sum contains only one nonzero term. Indeed, by definition we have
\[\mathcal{T}_{|B|}=\textstyle{\{\bfa=(a_1,\ldots,a_r)\in\N^r : 0\leqslant a_j\leqslant n_j\mbox{ for $j=1,\ldots,r$ and }\sum_{j=1}^ra_j=|B|\}}\, \]
but in Eq.~(\ref{eqq}) the product corresponding to an $\bfa\in\mathcal{T}_{|B|}$ is zero unless $a_j\geqslant |B\cap C_j|=b_j$ for all $j\in m_B$,
and we obviously have $a_j\geqslant b_j$ for $j\not\in m_B$. Taking the condition $\sum_{j=1}^ra_j=|B|$ into account, the only term that does not vanish corresponds to
$a_j=b_j$ for all $j\in m_B$, and $a_j=0=b_j$ for every $j\not\in m_B$, which yields
\[
q(B) ~=~ \gamma(b_1,\ldots,b_r)\prod_{j\in m_B}q^{C_j}(B\cap C_j) ~=~ \gamma(b_1,\ldots,b_r)\prod_{j=1}^rq^{C_j}(B\cap C_j)
\]
and thus completes the proof.
\end{proof}

The fact that $\mathcal{C}$-decomposability of $q$ is both necessary and sufficient to ensure the modular decomposition of the probability signature motivates the following definition, which extends the concept of modular decomposition to the general non-i.i.d.\ case.

\begin{definition}
We say that a semicoherent system $(C,\phi,F)$ is \emph{decomposable} if, for some partition $\mathcal{C}=\{C_1,\ldots,C_r\}$ of $C$ with $1<r<n$,
\begin{enumerate}
\item[(i)] the structure $\phi\colon\{0,1\}^n\to\{0,1\}$ has a modular decomposition into $r$ disjoint semicoherent modules $(C_j,\chi_j,G_j)$, $j=1,\ldots,r$, and

\item[(ii)] the function $q$ is $\mathcal{C}$-decomposable.
\end{enumerate}
\end{definition}

We note that Theorem~\ref{thm:main4} gives a very compact and quite
explicit formula for the tail probability signature of a decomposable system.

\section{Applications and interpretations}

The results obtained in the previous sections require some comments in terms of applications and interpretations. In this section we first show the impact of our main theorems through a few natural examples. Then, observing that the $\mathcal{C}$-decomposability of function $q$ is a key property in the main results, we show its relevance by providing some natural situations where it holds (without requiring that $q$ is symmetric) and give a possible interpretation of it as a form of independence.

%
%
%

\subsection{Applications of the main theorems}

In addition to Examples~\ref{ex:11} and \ref{ex:144}, we give here a few more examples of how the main theorems can be applied in usual situations. For the sake of simplicity, we focus here on the structural signature.

\begin{example}\label{ex:E}
Let $(C_1,\chi_1)$,...,$(C_r,\chi_r)$ be $r$ modules consisting of serially connected components. The tail structural signature of $(C_j,\chi_j)$ is given by
$\overline{S}^j_{0}=1$ and $\overline{S}^j_{l}=0$ for $1\leqslant l\leqslant n_j$ and $1\leqslant j\leqslant r$.
We observe that the tuples $\overline{\mathbf S}^{j}$ are Boolean. Therefore, in Formula (\ref{eq:ssd4}) we can use $\psi$ in place of $\widehat{\psi}$ and we obtain
\[
\overline{S}_{n-k} ~=~ \sum_{\bfa\in{\mathcal T}_k}\frac{{n_1\choose a_1}\cdots {n_r\choose a_r}}{{n\choose k}}\,\psi(\uno_{\{a_1=n_1\}},\ldots,\uno_{\{a_r=n_r\}}){\,},\qquad
0\leqslant k\leqslant n,
\]
where $\uno_{\{p\}}$ denotes the truth value of proposition $p$.

Recalling (\ref{eq:asd456aa}), this formula can also be written as
\[
\overline{S}_{n-k} ~=~ \sum_{\bfa\in{\mathcal T}_k}\frac{{n_1\choose a_1}\cdots {n_r\choose a_r}}{{n\choose k}}\sum_{B\subseteq [r]}\psi(B)
 \prod_{j\in B}\uno_{\{a_j=n_j\}}\prod_{j\in [r]\setminus B}\uno_{\{a_j<n_j\}}.
\]
As a special case, assume that the modules are connected in parallel. Then $\psi$ is the maximum function and we obtain
\[
\overline{S}_{n-k}~=~\sum_{\bfa\in{\mathcal V}_k}\frac{{n_1\choose a_1}\cdots {n_r\choose a_r}}{{n\choose k}}{\,},
\]
where ${\mathcal V}_k=\{\bfa\in\tk{\,}\mid{\,} a_j=n_j~\mbox{for some}\,j\in [r]\}$.
\end{example}

We can extend the previous example by considering modules of $k_j$-out-of-$n_j$ type.\footnote{Recall that a $k$-out-of-$n$ system is an $n$-component system that is in a failed state if and only if at least $k$ components are in a failed state (such a system is also referred to as a $k$-out-of-$n{\colon}F$ system). In particular a series system is a $1$-out-of-$n$ system.}

\begin{example}\label{ex:F}
For each $j\in[r]$, let $(C_j,\chi_j)$ be a $k_j$-out-of-$n_j$ system, with $1\leqslant k_j\leqslant n_j$, and let $\psi$ be the organizing structure function of these modules. The tail structural signature of $(C_j,\chi_j)$ is
then given by $\overline{S}^j_{l}=1$ if $0\leqslant l<k_j$ and $\overline{S}^j_{l}=0$ otherwise. Using Theorem \ref{thm:main}, we obtain (as in Example~\ref{ex:E})
\[
\overline{S}_{n-k} ~=~ \sum_{\bfa\in{\mathcal T}_k}\frac{{n_1\choose a_1}\cdots {n_r\choose a_r}}{{n\choose k}}\,\psi(\uno_{\{a_1>n_1-k_1\}},\ldots,\uno_{\{a_r>n_r-k_r\}}){\,},\qquad
0\leqslant k\leqslant n.
\]
When the modules are connected in parallel, $\psi$ is the maximum function and we obtain
\[
\overline{S}_{n-k} ~=~ \sum_{\bfa\in{\mathcal V}_k}\frac{{n_1\choose a_1}\cdots {n_r\choose a_r}}{{n\choose k}}{\,},
\]
where this time ${\mathcal V}_k=\{\bfa\in\tk{\,}\mid {\,}a_j>n_j-k_j~\mbox{for some}\,j\in [r]\}$.
\end{example}

We observe that formulas in Examples \ref{ex:E} and \ref{ex:F} also hold for the tail probability signatures whenever the relative quality function is decomposable for the considered partition, up to replacement of the multivariate hypergeometric coefficients ${n_1\choose a_1}\cdots {n_r\choose a_r}/{n\choose k}$ by the coefficients $c(a_1,\ldots,a_r)$ associated with the decomposition of $q$.

Different papers, starting from \cite{KocMukSam99}, have shown that stochastic comparisons between lifetimes of two systems can be established in
terms of their signatures. These results can be combined with ours in different forms. In the following example we consider the analysis
of redundancy.

\begin{example}\label{example:G}
We consider a coherent structure $\chi\colon\{0,1\}^n\to\{0,1\}$ and two disjoint sets of components $C'$ and $C''$, each containing $n$ components. The components are assumed to have i.i.d.\ lifetimes.

Having at our disposal the two sets $C'$ and $C''$, we can build redundant structures. A classical problem amounts to determining the optimal way to
arrange redundancies. In particular one can compare \emph{redundancy at system level} with \emph{redundancy at component level}. More formally, starting from $\chi$, let us consider the two structures $\phi_1,\phi_2\colon\{0,1\}^{2n}\to\{0,1\}$ defined as follows for $\bfx,\bfy\in\{0,1\}^n$:
\[
\phi_1(\bfx, \bfy)=\max(\chi(\bfx),\chi(\bfy))\quad\mbox{and}\quad \phi_2(\bfx,\bfy)=\chi(\max(x_1,y_1),\ldots,\max(x_n,y_n)).
\]
It is a well-known fact (see e.g.~\cite{BarPro81}) that the structure $\phi_2$
is more reliable than $\phi_1$. Our arguments can allow us to obtain some more
detailed results along this direction. In fact, once the signatures of $\phi_1$ and $\phi_2$
have been computed, results presented in \cite{KocMukSam99} can be applied
to obtain different stochastic comparisons between the lifetimes associated with $\phi_1$ and $\phi_2$.

Let ${\mathbf S}=(S_0,\ldots,S_n)$ denote the cumulative signature associated with $\chi$. We now apply Theorem \ref{thm:maindual}
 to compute the cumulative signatures associated with $\phi_1$ and $\phi_2$. For the first one, the structure inside each module is $\chi$
and the modules are connected in parallel. Thus the organizing function $\psi$ in Theorem  \ref{thm:maindual} is the maximum function (in two variables),
whose dual $\psi^d$ is the minimum function. We thus have $\widehat{\psi^d}(s,t)=s{\,}t$ for $s,t\in[0,1]$, and obtain the formula
\[
S_{2n-k}^{(1)}~=~\sum_{a=\max(0,n-k)}^{\min(n,2n-k)}\frac{{n \choose a}{n\choose 2n-k-a}}{{2n\choose k}}\, S_aS_{2n-k-a}{\,},\qquad 0\leqslant k\leqslant 2n.
\]
For the second one, the structure inside each module is parallel, so the associated cumulative structural signature is ${\mathbf S}^j=(0,0,1)$ for $j=1,\ldots,n$.
Moreover the modules are connected according to $\chi$. Therefore we have
\[
S_{2n-k}^{(2)}~=~\sum_{\bfa\in\tk}\frac{{2\choose a_1}\cdots{2\choose a_n}}{{2n\choose k}}\, \chi^d(\uno_{\{a_1=0\}},\ldots,\uno_{\{a_n=0\}}){\,},
\qquad 0\leqslant k\leqslant 2n.
\]
\end{example}

\subsection{On the $\mathcal{C}$-decomposability of the relative quality function}

We examine some natural cases where the relative quality function $q$ is $\mathcal{C}$-decomposable for some partition $\mathcal{C}=\{C_1,\ldots,C_r\}$.

The conditions on the probability distribution of lifetimes for $q$ to be $\mathcal{C}$-decompo{\-}sable express through a finite number of consistent equations on the values of the functions $q$ and $q^{C_j}$. It is known that the values of these functions depend on the distribution of lifetimes only through the values $p_{\sigma}=\Pr(T_{\sigma(1)}<\cdots <T_{\sigma(n)})$ $(\sigma\in S_n)$, where $S_n$ is the set of permutations on $[n]$. In fact we have (see \cite{MarMat11})
$$
q(A) ~= \sum_{\sigma\in S_n{\,}:{\,}\{\sigma(n-|A|+1),\ldots,\sigma(n)\}=A}p_{\sigma}
$$
and a similar formula can be derived for $q^{C_j}$ (see Example~\ref{ex:20-19}).

Therefore, (\ref{eq:s8fd7}) is a set of conditions on the values $p_{\sigma}$ $(\sigma\in S_n)$. Every family $(p_{\sigma}:\sigma\in S_n)$ satisfying (\ref{eq:s8fd7}) provides infinitely many distributions of lifetimes for which $q$ is $\mathcal{C}$-decomposable: $\mathcal{C}$-decomposability holds regardless of the probability laws $\mathcal{L}(T_1,\ldots,T_n\mid T_{\sigma(1)}<\cdots <T_{\sigma(n)})$. Clearly, setting $p_{\sigma}=1/n!$, the functions $q$ and $q^{C_j}$ ($j=1,\ldots,r$) reduce to $q_0$ and $q_0^{C_j}$, respectively, and hence $q$ is $\mathcal{C}$-decomposable. We thus have an infinite family of distributions of lifetimes for which $q$ is $\mathcal{C}$-decomposable. This family subsumes the i.i.d.\ and exchangeable cases.

Let us now consider situations where the relative quality function $q$ may be different from $q_0$ and is $\mathcal{C}$-decomposable for some partition $\mathcal{C}=\{C_1,\ldots,C_r\}$.

%

\begin{definition}
We say that the function $q\colon 2^C\to\R$ is \emph{$\mathcal{C}$-symmetric} for a partition $\mathcal{C}=\{C_1,\ldots,C_r\}$ if $q(A)=q(B)$ for every $A,B\subseteq C$ such that $|A\cap C_j|=|B\cap C_j|$ for every $j\in [r]$.
\end{definition}

In other terms, $q$ is $\mathcal{C}$-symmetric if and only if $q(A)$ depends on $A$ only through the numbers $|A\cap C_1|,\ldots,|A\cap C_r|$.

\begin{proposition}\label{prop:yx87c}
If the function $q$ is $\mathcal{C}$-symmetric for some partition $\mathcal{C}=\{C_1,\ldots,C_r\}$ and the functions $q^{C_j}$ are symmetric for $j=1,\ldots,r$, then $q$ is $\mathcal{C}$-decomposable.
\end{proposition}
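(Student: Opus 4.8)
The plan is to construct the required $\mathcal{C}$-symmetric function $c$ explicitly, taking as a guide the formula of Remark~\ref{rem:f7}. That remark suggests setting $c(A)=q(A)\big/\prod_{j=1}^rq^{C_j}(A_j)$, but this quotient is a priori only meaningful where the denominator does not vanish, whereas $\mathcal{C}$-decomposability requires $c$ to be defined on all of $2^C$. The first task is therefore to show that under the present hypotheses the denominator never vanishes, so that the quotient furnishes a genuine function on $2^C$; once that is settled, everything else is a short verification.

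First I would pin down the functions $q^{C_j}$. Since $q^{C_j}$ is itself a relative quality function on $C_j$, the identity (\ref{eq:s7df5}) applies to it and gives $\sum_{B\subseteq C_j,\,|B|=k}q^{C_j}(B)=1$ for each $0\leqslant k\leqslant n_j$. By the symmetry hypothesis, $q^{C_j}$ is constant on the $\binom{n_j}{k}$ subsets of $C_j$ of a given cardinality $k$, whence
$$
q^{C_j}(A_j) ~=~ \frac{1}{\binom{n_j}{|A_j|}}\,,\qquad A_j\subseteq C_j\, .
$$
In particular every factor $q^{C_j}(A_j)$ is strictly positive, so the candidate denominator never vanishes, and I may define, for all $A\subseteq C$,
$$
c(A) ~=~ q(A)\,\prod_{j=1}^r\binom{n_j}{|A_j|}\, .
$$

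It then remains to check the two defining requirements. For $\mathcal{C}$-symmetry, I note that $q$ is $\mathcal{C}$-symmetric by hypothesis, so $q(A)$ depends only on the cardinalities $|A_1|,\ldots,|A_r|$, while each factor $\binom{n_j}{|A_j|}$ depends only on $|A_j|$; hence $c(A)$ depends only on $|A_1|,\ldots,|A_r|$ and is $\mathcal{C}$-symmetric. For the factorization (\ref{eq:s8fd7}), a direct substitution gives $c(A)\prod_{j=1}^rq^{C_j}(A_j)=q(A)\,\prod_{j=1}^r\binom{n_j}{|A_j|}\prod_{j=1}^r\binom{n_j}{|A_j|}^{-1}=q(A)$, as required. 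There is no substantive obstacle in this argument: the only point needing care is the well-definedness of $c$ on the whole of $2^C$, and this is disposed of immediately by the positivity of the symmetric marginal functions $q^{C_j}$.
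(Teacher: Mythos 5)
Your proof is correct and follows essentially the same route as the paper: both use the symmetry of $q^{C_j}$ together with (\ref{eq:s7df5}) to get $q^{C_j}(A_j)=1/\binom{n_j}{|A_j|}$, then define $c(A)=q(A)\prod_{j=1}^r\binom{n_j}{|A_j|}$ and observe it is $\mathcal{C}$-symmetric because $q$ is. Your additional remarks on well-definedness and positivity are fine but only make explicit what the paper leaves implicit.
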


\begin{proof}
Under the assumptions of the proposition, we have $q^{C_j}(B)=1/{n_j\choose |B|}$ for all $B\subseteq C_j$ and all $j\in [r]$. Hence (\ref{eq:s8fd7}) holds since the expression
$$
\frac{q(A)}{\prod_{j=1}^rq^{C_j}(A\cap C_j)} ~=~ q(A) \,\prod_{j=1}^r {n_j\choose |A\cap C_j|}\, ,\qquad A\subseteq C\, ,
$$
depends only on the $r$-tuple $(|A\cap C_1|,\ldots,|A\cap C_r|)$.
\end{proof}

The special case where the functions $q$ and $q^{C_j}$ $(j=1,\ldots,r)$ are symmetric is presented in the next corollary.

\begin{corollary}\label{cor:87sdf6}
If $q$ and $q^{C_j}$ $(j=1,\ldots,r)$ are symmetric, then $q$ is $\mathcal{C}$-decomposable for every partition $\mathcal{C}=\{C_1,\ldots,C_r\}$ and we have
\begin{equation}\label{eq:ddf5}
c(\bfa) ~=~ \frac{{n_1\choose a_1}\cdots {n_r\choose a_r}}{{n\choose a_1+\cdots +a_r}}\, .
\end{equation}
\end{corollary}

The following example shows that the assumptions of Proposition~\ref{prop:yx87c} do not imply those of Corollary~\ref{cor:87sdf6}. Similarly, as already mentioned, the assumptions of Corollary~\ref{cor:87sdf6} do not imply that the component lifetimes are i.i.d.\ or exchangeable.

\begin{example}
Let us consider a system made up of three components, with lifetimes $T_1,T_2,T_3$, such that for every permutation $\sigma$ on $\{1,2,3\}$, we have
\[
\Pr(T_{\sigma(1)}<T_{\sigma(2)}<T_{\sigma(3)}) ~=~
\left\{\begin{array}{ll}1/4 &\mbox{if}~\sigma(3)=3,\\ 1/8 & \mbox{otherwise}.
       \end{array}
\right.\]
Let us show that the function $q$ is $\mathcal{C}$-symmetric for the partition $\mathcal{C}=(\{1,2\},\{3\})$. We have
$$
q(\{1\}) ~=~ \Pr(T_2<T_3<T_1) + \Pr(T_3<T_2<T_1) ~=~ \frac{1}{4}{\,}.
$$
Similarly, $q(\{2\})=1/4$, $q(\{3\})=1/2$, and $q(\{1,3\})=q(\{2,3\})=3/8$. We also have
$$
q^{\{1,2\}}(\{1\}) ~=~ \Pr(T_2<T_1) ~=~ \frac{1}{2} ~=~ \Pr(T_1<T_2) ~=~ q^{\{1,2\}}(\{2\}).
$$
It follows that the assumptions of Proposition~\ref{prop:yx87c} are satisfied. However, $q$ is not symmetric.
\end{example}

Let us now show that the assumptions of Proposition~\ref{prop:yx87c} are not necessary for $q$ to be $\mathcal{C}$-decomposable. In the next example the function $q$ is $\mathcal{C}$-decomposable for a given partition $\mathcal{C}$ but it is not $\mathcal{C}$-symmetric.

\begin{example}\label{ex:20-19}
Let us consider a system made up of three components, with lifetimes $T_1,T_2,T_3$, such that for every permutation $\sigma$ on $\{1,2,3\}$, we have
\[
\Pr(T_{\sigma(1)}<T_{\sigma(2)}<T_{\sigma(3)}) ~=~
\left\{\begin{array}{ll}2/9 &\mbox{if}~\sigma^{-1}(1)<\sigma^{-1}(2),\\ 1/9 & \mbox{otherwise}.
       \end{array}
\right.\]
In other words, we have $\Pr(T_{\sigma(1)}<T_{\sigma(2)}<T_{\sigma(3)})=2/9$ if and only if the event $(T_{\sigma(1)}<T_{\sigma(2)}<T_{\sigma(3)})$ is included in the event $(T_1<T_2)$.

We consider the partition $\mathcal{C}=(\{1,2\},\{3\})$. By using the definitions of functions $q$ and $q^{\{1,2\}}$, we obtain for instance
$$
q(\{1\}) ~=~ \Pr(T_2<T_3<T_1)+\Pr(T_3<T_2<T_1)~=~ \frac{2}{9}
$$
and
$$
q^{\{1,2\}}(\{1\})~=~\Pr(T_2<T_1<T_3)+\Pr(T_2<T_3<T_1)+\Pr(T_3<T_2<T_1)~=~ \frac{1}{3}{\,}.
$$

Similarly, we obtain the following values:
$$
q(\{1\}) ~=~ q(\{1,3\}) ~=~ \frac{2}{9}{\,},\quad q(\{3\}) ~=~ q(\{1,2\})~=~ q^{\{1,2\}}(\{1\})~=~ \frac{1}{3}{\,},
$$
$$
q(\{2\}) ~=~ q(\{2,3\}) ~=~ \frac{4}{9}{\,},\quad q^{\{1,2\}}(\{2\})~=~ \frac{2}{3}{\,}.
$$
It is then easy to see that the function $q$ is $\mathcal{C}$-decomposable for the partition $\mathcal{C}=(\{1,2\},\{3\})$ by showing that (\ref{eq:s8fd7}) is satisfied. For instance, we must have
$$
q(\{1\}) ~=~ c(1,0){\,}q^{\{1,2\}}(\{1\}){\,}q^{\{3\}}(\varnothing)
$$
and
$$
q(\{2\}) ~=~ c(1,0){\,}q^{\{1,2\}}(\{2\}){\,}q^{\{3\}}(\varnothing),
$$
which are satisfied when $c(1,0)=2/3$. However, $q$ is not $\mathcal{C}$-symmetric since we have $q(\{1\})=2/9$ and $q(\{2\})=4/9$.
\end{example}

Several results obtained in reliability theory require the independence of the component lifetimes. Since our results are based on the $\mathcal{C}$-decomposability of $q$, it is natural to interpret this property as a weak form of independence. In this respect, we have the following result, which makes use of the events $E_{k,\bfa}$ defined in (\ref{eq:f68sa7dg}).

\begin{proposition}\label{cor:intc}
Let $\mathcal{C}=\{C_1,\ldots,C_r\}$ be a partition of $C$. Then $q$ is $\mathcal{C}$-decomposable if and only if, for every $A\subseteq C$, we have $\Pr(E_{k,\bfa})=0$ (with $k=|A|$ and $a_j=|A\cap C_j|$) or
$$
\Pr\Big(\max_{i\notin A}T_i<\min_{i\in A}T_i{\,}\Big|{\,}E_{k,\bfa}\Big)~=~ \prod_{j=1}^rq^{C_j}(A\cap C_j).
$$
\end{proposition}

\begin{proof}
On the one hand, by Proposition~\ref{prop:intc}, $q$ is $\mathcal{C}$-decomposable if and only if
$$
q(A) ~=~ \Pr(E_{k,\bfa})\,\prod_{j=1}^rq^{C_j}(A\cap C_j)\, ,\qquad A\subseteq C\, .
$$
On the other hand, we always have
$$
q(A) ~=~  \Pr(E_{k,\bfa})\,\Pr\Big(\max_{i\notin A}T_i<\min_{i\in A}T_i{\,}\Big|{\,}E_{k,\bfa}\Big)\, ,\qquad A\subseteq C\, ,
$$
which completes the proof.
\end{proof}

Proposition~\ref{cor:intc} says that $q$ is $\mathcal{C}$-decomposable if and only if, for every $A\subseteq C$, if $\Pr(E_{k,\bfa})\neq 0$ (with $k=|A|$ and $a_j=|A\cap C_j|$), then the probability that the best $k$ components are precisely those in $A$ knowing that among them there are exactly $a_j$ components in $C_j$, $j\in [r]$, factorizes as the product over $j\in [r]$ of the probabilities that the best $a_j$ components in $C_j$ are precisely those in $A\cap C_j$. Thus, the $\mathcal{C}$-decomposability of $q$ turns out to be a form of independence.

\section{Conclusion}

The main purpose of this paper is to analyze the computation of the signature of a system decomposed into disjoint modules in terms of the signatures of these modules. This problem was considered recently in \cite{DaZheHu12} and independently in \cite{GerShpSpi11} in the special case of a $2$-module system with i.i.d.\ or exchangeable lifetimes, that is, for the structural signature. We provide here the most general result for computing the structural signature of an arbitrary system decomposed into disjoint modules in terms of the corresponding module signatures (Theorem~\ref{thm:main}), thus fully answering the question raised in \cite{DaZheHu12} and \cite{GerShpSpi11}. We observe that our derivations are substantially based on formula (\ref{eq:asad678}). Such a formula, which was first pointed out in \cite{Bol01}, ties the concept of structural signature with the family of path sets, which constitutes a classical tool in the reliability analysis of coherent systems. In fact, formula (\ref{eq:asad678}) shows that only the numbers of the path sets of the different sizes are relevant in the computation of the tail structural signature.

This general answer is further extended to the general non-i.i.d.\ case in Theorem~\ref{thm:main4} for the computation of the probability signature of the system in terms of the probability signatures of the modules. In this general case one cannot expect to obtain a general decomposition result that would hold without any assumption on the distribution of the lifetimes. This is why we introduce here a concept of factorization for the quality function associated with a system decomposed into disjoint modules (Definition~\ref{de:7asd6}). This concept might be seen as a special notion of \emph{partial exchangeability} defined in terms of the relative quality function $q$. Even though this property may seem artificial or unnatural at first glance, Theorems~\ref{thm:main4} and \ref{thm:f5sd7fsd5} together show that in a sense it is actually necessary and sufficient for decomposing the probability signature. Finally, we present natural examples of non-exchangeable distributions of lifetimes where this factorization condition holds.

We also note that our approach allows us to treat the general non-i.i.d.\ case of arbitrary systems with proofs that are much simpler than those used in \cite{DaZheHu12} to deal with the case of $2$-module systems with i.i.d.\ lifetimes.%

\section*{Acknowledgments}

This paper was started during a one-week visit of Jean-Luc Marichal and Pierre Mathonet at the Department of
 Mathematics of the University La Sapienza, Rome, Italy. This opportunity is greatly acknowledged.
Fabio Spizzichino acknowledges financial support of Research Project Modelli e Algoritmi Stocastici, University La Sapienza, 2009. Jean-Luc Marichal acknowledges partial support by the internal research project F1R-MTH-PUL-12RDO2 of the University of Luxembourg.

\end{document}